\renewcommand{\L}{\mathop{\bf L{}}\nolimits}
\newcommand{\g}{{\mathfrak g}}
\newcommand{\h}{{\mathfrak h}}
\renewcommand{\:}{\colon}
\newcommand{\lie}[1]{\mathfrak{#1}}
\newcommand{\R}{\mathbb{R}}
\newcommand{\N}{\mathbb{N}}
\newcommand{\B}{\mathcal{B}}
\DeclareMathOperator{\im}{Im} 
\renewcommand{\title}
{\vbox{\centerline{Closedness of the tangent spaces}
{\centerline{to the orbits of proper actions}}}}
\renewcommand{\author}{Madeleine Jotz, Karl-Hermann Neeb}
\begin{document}
\firstpage

\begin{abstract} 
In this note we show that for any proper action of a Banach--Lie group 
$G$ on a Banach manifold $M$, the corresponding tangent maps 
$\g \to T_x(M)$ have closed range for each $x \in M$, i.e., the 
tangent spaces of the orbits are closed. 
As a consequence, for each free proper action on a Hilbert manifold, 
the quotient $M/G$ carries a natural manifold structure. \\
\nin {\em Keywords and phrases:} Banach Lie group, Banach manifold, proper 
action 
 \hfill\break 
{\em Mathematics Subject Index 2000:} 22E65,  58B25,  57E20
\end{abstract}

\section{Introduction} 

It is a classical result of Palais that a proper smooth 
action of a non-compact finite-dimensional Lie group $G$ on a smooth 
manifold $M$ behaves in many  respects like the action of a compact 
Lie group. In particular, all orbits are closed submanifolds and the 
action of $G$ on a suitable neighborhood of an orbit can be described 
nicely in terms of the action of the compact stabilizer group on a 
slice (\cite{Pa61}). Since properness of a group action is a purely topological
concept, it also applies to smooth actions of Banach--Lie groups on 
Banach manifolds. However, one cannot expect results as strong 
as Palais' in this context. One reason is that a closed subspace 
of a Banach space need not have a closed complement, 
$c_0(\N) \subseteq \ell^\infty(\N)$ being a simple example. 
Since the translation action of the closed subspace 
$c_0(\N)$ on $\ell^\infty(\N)$ is proper, properness does not imply 
anything on the existence of closed complements. In particular, 
slices need not exist for proper smooth actions. 

Since the stabilizer subgroups $G_x$ are compact for a proper 
action, they are in particular finite-dimensional Lie subgroups, so that 
the quotient $G/G_x$ carries a natural manifold structure and the 
smooth injection $G/G_x \to Gx$
is a topological embedding. In this 
sense all orbits of a proper action carry a natural manifold structure. 
The main concern of this note is to show that this manifold structure 
is, at least in a very weak sense, compatible with that of the surrounding 
manifold $M$, namely the tangent space of $Gx$, i.e., the image 
of the tangent map $T_e(G) \to T_x(M)$ of the orbit map, 
is a closed subspace of $T_x(M)$. As a consequence, 
Palais' theory carries immediately over to proper actions on Hilbert 
manifolds. In the general case our result eliminates at least one 
of the assumptions, usually stated in addition to properness, for 
a slice theorem or a quotient theorem to hold 
(cf.\ \cite[Ch.~3, \S 1, no.~5, Prop.~10]{Bo89a}).

\section{Tangent maps of proper actions} 

Let $G$ be a Banach--Lie group with Lie algebra $\g = \L(G)$ 
and neutral element~$e$. 
In the following $M$ will be a smooth Banach manifold with a smooth
action $\Phi:G\times M\to M$ of $G$. 
The diffeomorphism of $M$ defined by $g\in G$ 
will be denoted $\Phi_g:M\to M$ and 
$\Phi(g,x)$ will simply be written $g\cdot x$ or $gx$. 
For each $x\in M$ we denote the \emph{orbit map} by $\Phi^x:G\to
M$, $g\mapsto gx$. 

To each $\xi\in\lie g$ we associate the corresponding 
vector field $\xi_M$ on $M$, defined by 
$\xi_M(x)=\left.\frac{d}{dt}\right\arrowvert_{t=0}\Phi_{\exp(t\xi)}(x)$ 
and recall that the linear map 
$\g \to {\cal V}(M), \xi \mapsto - \xi_M$ is a morphism of Lie algebras. 

The action is said to be \emph{proper} if the smooth map $\Theta:G\times
M\to M\times M$, $(g,x)\mapsto (gx,x)$, is proper, i.e., 
a closed map with compact fibers 
$$ \Theta^{-1}(x,y) = \{ (g,y) \in G \times M \: gy = x\}. $$
If $G$ and $M$ are finite-dimensional, i.e., locally compact, 
properness of the action is equivalent to inverse images of compact subsets 
$K \subseteq G \times M$ under $\Theta$ being compact. 

In this note we show the following theorem:
\begin{theorem}\label{thm}
If the action of $G$ on $M$ is  proper, then 
the image of the bounded linear map
$$ \Psi_x=T_e\Phi^x: \lie g\to T_xM, \quad \xi\mapsto \xi_M(x) $$
is closed for all $x\in M$.
\end{theorem}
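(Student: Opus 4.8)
The plan is to reduce the statement to a lower bound for $\Psi_x$ modulo its kernel and then to supply the missing compactness through properness. First I would identify $\ker\Psi_x$. Since $\xi_M$ is a smooth vector field and $t\mapsto\exp(t\xi)x$ is its integral curve through $x$, the condition $\Psi_x\xi=\xi_M(x)=0$ means this curve has zero initial velocity; by uniqueness of integral curves it must be constant, so $\exp(t\xi)x=x$ for all $t$ and hence $\ker\Psi_x=\g_x=\L(G_x)$. As the action is proper, $G_x$ is compact, so $\g_x$ is finite-dimensional and admits a closed complement $V\subseteq\g$. Because $\Psi_x$ vanishes on $\g_x$, its range equals $\Psi_x(V)$, and for the injective bounded operator $\Psi_x|_V\colon V\to T_xM$ between Banach spaces, closedness of the range is equivalent (via the open mapping theorem) to the existence of $c>0$ with $\|\Psi_x\xi\|\ge c\|\xi\|$ for all $\xi\in V$. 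So it suffices to establish this lower bound.

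Suppose it fails. Then there are $\xi_n\in V$ with $\|\xi_n\|=1$ and $\Psi_x\xi_n=(\xi_n)_M(x)\to0$. The whole difficulty is that the unit sphere of $\g$ is not compact, so one cannot simply extract a convergent subsequence of the $\xi_n$ and pass to the limit. The idea is to transport this infinitesimal information up to the group and let properness provide the compactness that the sphere lacks.

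To do this I would fix a small $t_0>0$ and work in a chart $\phi$ around $x$, in which the infinitesimal action becomes $(\xi,u)\mapsto A(u)\xi$ for a $C^1$ map $A$ into $\mathcal L(\g,T_xM)$ (the smoothness and fiberwise linearity of $(\xi,y)\mapsto\xi_M(y)$ yield such an operator-valued $A$). The curve $c_n(t)=\phi(\exp(t\xi_n)x)$ then solves $c_n'=A(c_n)\xi_n$ with $c_n(0)=0$; writing $c_n'(t)=\bigl(A(c_n(t))-A(0)\bigr)\xi_n+\Psi_x\xi_n$ and using the local Lipschitz bound $L$ on $A$ together with $\|\xi_n\|=1$, a Grönwall estimate gives $\|c_n(t)\|\le \tfrac{\|\Psi_x\xi_n\|}{L}\bigl(e^{Lt}-1\bigr)$ as long as $c_n$ remains in the chart, a restriction removed by the usual continuation argument once $n$ is large. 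Since $\Psi_x\xi_n\to0$, this forces $\exp(t_0\xi_n)x\to x$. Now properness enters: the points $\Theta(\exp(t_0\xi_n),x)=(\exp(t_0\xi_n)x,x)$ converge to $(x,x)$ and hence lie in a compact set, so a subsequence $\exp(t_0\xi_{n_k})$ converges to some $h\in G$. Choosing $t_0$ small enough that $\exp$ restricts to a diffeomorphism of a ball $B_r$ onto an open $\Omega\ni e$ and that $\overline{\exp(\overline{B_{t_0}})}\subseteq\Omega$, the limit $h$ lies in $\Omega$, so I may invert: $t_0\xi_{n_k}=\exp^{-1}\bigl(\exp(t_0\xi_{n_k})\bigr)\to\exp^{-1}(h)=:\zeta$, whence $\xi_{n_k}\to\zeta/t_0$. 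Then $\zeta/t_0\in V$ (as $V$ is closed) with $\|\zeta/t_0\|=1$, while continuity of $\Psi_x$ gives $\Psi_x(\zeta/t_0)=\lim\Psi_x\xi_{n_k}=0$, so $\zeta/t_0\in\ker\Psi_x\cap V=\{0\}$, a contradiction.

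I expect the Grönwall step, and in particular the bookkeeping that keeps the limiting group element $h$ inside the domain of $\exp^{-1}$, to be the main obstacle. This is precisely the point where the non-compactness of the sphere is circumvented: it requires choosing $t_0$ carefully relative to the $\exp$-chart, verifying that the integral curves do not escape the coordinate patch, and ensuring that the subsequential limit produced by properness can be pulled back faithfully through the exponential map.
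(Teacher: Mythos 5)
Your proof is correct, and up to the Gr\"onwall step it follows the same architecture as the paper's: identify $\ker\Psi_x=\g_x$, use compactness of $G_x$ to obtain a finite-dimensional kernel with a closed complement (your $V$, the paper's $\h_x$), reduce closedness of the range to a lower bound $\|\Psi_x\xi\|\ge c\|\xi\|$ on that complement, and then use a Lipschitz/Gr\"onwall estimate in a chart (the paper's Lemma~\ref{lem:3}) to show that a normalized sequence with $\Psi_x\xi_n\to 0$ forces $\exp(t_0\xi_n)\cdot x\to x$. Where you genuinely diverge is in how properness converts this into a contradiction. The paper invokes Lemma~\ref{lem:1}(b): the orbit map induces a topological embedding $G/G_x\to M$, so $\exp(\eta_n)\cdot x\to x$ with $\|\eta_n\|=r$ is already impossible because $\exp(\eta_n)G_x\not\to G_x$ in $G/G_x$. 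You instead use properness in its rawest form --- preimages of compact sets under $\Theta$ are compact --- to extract a convergent subsequence $\exp(t_0\xi_{n_k})\to h$ in $G$, then pull the limit back through the exponential chart to produce a unit vector in $\ker\Psi_x\cap V=\{0\}$. The bookkeeping you flag does go through: regularity of the topological group $G$ lets you pick $t_0$ with $\overline{\exp(\overline{B_{t_0}})}$ inside the injectivity domain of $\exp$, so $h$ can be inverted; the relevant compact preimage lies in $G\times\{x\}$ with $G$ metrizable, so sequential extraction is legitimate; and continuity of $\Psi_x$ and closedness of $V$ finish the argument. Your endgame is somewhat more self-contained, avoiding the quotient-manifold structure on $G/G_x$ and the Bourbaki embedding result (though you still need Lemma~\ref{lem:1}(a) for the closed complement); the paper's is shorter given that reference.
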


For the proof we need the following three lemmas. With the first lemma
we are able to describe the kernel of the map
$T_e\Phi^x$ as the Lie algebra of the isotropy group of $x$, for an arbitrary
$x\in M$. 

\begin{lemma}\label{lem:1}
For each $x\in M$ the following assertions hold: 
\begin{description}
\item[\rm(a)] The isotropy subgroup $G_x:=\{g\in G\mid gx=x\}$ 
is a compact Lie subgroup of $G$. In particular, 
its Lie algebra 
$\g_x=\{\xi\in\lie g \mid \xi_M(x)=0\}$ is finite-dimensional, 
hence complemented. The quotient space $G/G_x$ carries a natural  
manifold structure such that for each closed complement 
$\h_x$ of $\g_x$ in $\g$ the map 
$\h_x \to G/G_x, \xi \mapsto \exp(\xi)G_x$ 
is a local diffeomorphism in $0$. 
\item[\rm(b)] The orbit map $\Phi^x \: G \to M$ induces a topological 
embedding $G/G_x \to M$ onto a closed subset. 
\end{description}
\end{lemma}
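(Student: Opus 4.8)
The plan is to establish (a) and (b) in turn, properness entering in both through the fibers and the closedness of $\Theta$. For (a) I would first note that $G_x = (\Phi^x)^{-1}(x)$ is closed by continuity of $\Phi^x$, and that it is compact because $G_x \times \{x\} = \Theta^{-1}(x,x)$ is a fiber of the proper map $\Theta$. The decisive input is then the known fact that a compact subgroup of a Banach--Lie group is automatically a finite-dimensional Lie subgroup, whose Lie algebra is $\{\xi \in \g : \exp(\R\xi) \subseteq G_x\}$; this is the step I expect to carry the real weight, and for it I would invoke the relevant structure theory rather than reprove it. To identify this Lie algebra with $\g_x = \{\xi : \xi_M(x) = 0\}$, I would observe that $t \mapsto \exp(t\xi)\cdot x$ is the integral curve of the vector field $\xi_M$ issuing from $x$: if $\xi_M(x) = 0$ then uniqueness of integral curves forces this curve to be constant, so $\exp(\R\xi) \subseteq G_x$, while the reverse inclusion is immediate by differentiating at $0$.

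Granting that $\g_x$ is finite-dimensional, it is complemented by the standard Hahn--Banach construction: extend coordinate functionals of $\g_x$ to continuous functionals on $\g$ to obtain a continuous projection onto $\g_x$. For any closed complement $\h_x$, the quotient manifold structure on $G/G_x$ and the assertion that $\h_x \to G/G_x$, $\xi \mapsto \exp(\xi)G_x$ is a local diffeomorphism at $0$ then follow from the general theory of homogeneous spaces of Banach--Lie groups modulo a Lie subgroup with complemented Lie algebra: the differential at $0$ of this map is the canonical isomorphism $\h_x \xrightarrow{\ \sim\ } \g/\g_x \cong T_{eG_x}(G/G_x)$ coming from $\g = \h_x \oplus \g_x$, so the inverse function theorem applies.

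For (b), the induced map $\bar\Phi^x : G/G_x \to M$ is well-defined and injective because $gx = hx \iff h^{-1}g \in G_x \iff gG_x = hG_x$, and it is continuous. The key is to show it is proper. For compact $K \subseteq M$ one has $\{g : gx \in K\}\times\{x\} = \Theta^{-1}(K\times\{x\}) \cap (G\times\{x\})$, which is compact because $\Theta$ is proper and proper maps pull compact sets back to compact sets; hence its image in $G/G_x$, which is exactly $(\bar\Phi^x)^{-1}(K)$, is compact. Since $M$ is Hausdorff and first countable, hence a $k$-space, a continuous map with compact point-preimages that pulls compacta back to compacta is closed; being moreover injective, $\bar\Phi^x$ is then a homeomorphism onto its image and that image is closed. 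This delivers the topological embedding onto a closed subset and, in particular, the closedness of the orbit $Gx$.

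The one genuinely substantial point is the passage from compactness of $G_x$ to its being a finite-dimensional Lie subgroup with Lie algebra $\g_x$; once this is in hand, the complementation is elementary, the homogeneous-space charts are formal, and the embedding in (b) is a clean consequence of properness through the $k$-space criterion.
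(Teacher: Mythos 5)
Your proposal is correct and follows essentially the same route as the paper: compactness of $G_x$ as the fiber $\Theta^{-1}(x,x)$, the structure theorem that a compact subgroup of a Banach--Lie group is a finite-dimensional Lie subgroup, complementation of the finite-dimensional $\g_x$, the standard homogeneous-space charts for (a), and properness of the induced map $G/G_x \to M$ for (b), which the paper simply delegates to a citation of Bourbaki. The details you add beyond the paper's text (the integral-curve identification of $\g_x$, the Hahn--Banach projection, and the $k$-space closedness criterion) are all correct and merely unpack what the paper leaves to references.
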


\begin{proof} (a) It follows from the properness of the action that the set
$\Theta^{-1}(x,x)=G_x\times \{x\}$ is compact. Hence $G_x$ is a compact 
subgroup of $G$ and therefore a finite-dimensional Lie subgroup 
(cf.\ \cite[Thm.~IV.3.16]{Ne06}; \cite{GN08} and \cite[Thm.~5.28]{HM98}) 
whose Lie algebra is 
$$ \g_x = \{ \xi \in \g \: \exp(\R \xi) \subseteq G_x \} 
= \{ \xi \in \g \: \xi_M(x)=0\} = \ker(\Psi_x). $$
Since the finite-dimensional subspace $\g_x$ of $\g$ has a closed 
complement, $G_x$ is a split Lie subgroup 
(called a Lie subgroup in \cite{Bo89a}), so that the remainder 
of (a) follows from \cite[Ch.~3, \S 1.6, Prop.~11]{Bo89a}. 

\nin (b) This is a general property of proper group actions 
(\cite[Ch.~3, \S 4.2, Prop.~4]{Bo89b}). 
\end{proof}

The second lemma is a well-known  fact in functional analysis and can be found for example in \cite[Lemma 4.47]{RY08}. 

\begin{lemma}\label{lem:2}
Let $X$ and $Y$ be Banach spaces and $T\: X \to Y$ be continuous linear. 
If there exists an $\alpha>0$ such that
$\|Tx\|\geq \alpha\|x\|$ for all $x\in X$, then $\im(T)$ is closed.
\end{lemma}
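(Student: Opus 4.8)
The plan is to verify closedness of $\im(T)$ by a direct sequential argument that exploits the lower bound. First I would observe that the estimate $\|Tx\|\geq\alpha\|x\|$ already forces $T$ to be injective: if $Tx=0$ then $\alpha\|x\|\leq 0$, whence $x=0$. This lets me speak unambiguously of preimages under $T$ throughout the argument.

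Next I would take an arbitrary sequence $(y_n)$ in $\im(T)$ converging to some $y\in Y$ and write $y_n=Tx_n$ with $x_n\in X$. Since $(y_n)$ converges it is Cauchy, and applying the hypothesis to the differences gives
$$ \|x_n-x_m\|\leq \frac{1}{\alpha}\|T(x_n-x_m)\|=\frac{1}{\alpha}\|y_n-y_m\|, $$
so that $(x_n)$ is itself a Cauchy sequence in $X$. Thus the lower bound transfers the Cauchy property from the image back to the domain.

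The essential point is completeness. Because $X$ is a Banach space, the Cauchy sequence $(x_n)$ converges to some $x\in X$. Continuity of $T$ then yields $y_n=Tx_n\to Tx$, and since limits in $Y$ are unique we conclude $y=Tx\in\im(T)$. As $y$ was an arbitrary limit point of $\im(T)$, the range is closed.

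I expect no genuine obstacle here: the only ingredient beyond the elementary algebra of the estimate is the completeness of the domain $X$, which is precisely what converts the transferred Cauchy property into an actual limit. Put differently, the lower bound does all the work by making $T$ bounded below, so that the inverse map $T^{-1}\:\im(T)\to X$ is continuous; closedness of the range is then automatic from completeness of $X$.
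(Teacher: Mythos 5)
Your proof is correct and is the standard argument: the lower bound makes preimages of a Cauchy sequence in $\im(T)$ Cauchy in $X$, and completeness of $X$ plus continuity of $T$ finishes the job. The paper does not prove this lemma itself but cites it as \cite[Lemma 4.47]{RY08}, where essentially this same argument is given, so there is nothing to add.
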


The third lemma we will need can be found in \cite[IV,\S 1, no.~4]{Bo51}:

\begin{lemma}\label{lem:3} Let $I$ be an interval and $W$ an open subset of a
  normed space~$E$.
Let $k:I\to\mathbb{R}_{>0}$ be a real regulated function and 
$f:I\times W\to E$ a function satisfying $\|f(t,x_1)-f(t,x_2)\|\leq
k(t)\|x_1-x_2\|$ for all $t\in I$ and
$x_1,x_2\in W$.
Let $u_i:I\to W$, $i =1,2$, be two differentiable function 
which are approximate solutions of $\dot{c}(t)=f(t,c(t))$ in the sense 
that $\|\dot u_i(t)-f(t,u_i(t))\|\leq \varepsilon_i$ 
for all $t\in I$ and $i =1,2$. Then we have 
for  $t\in I$ with $t\geq t_0$:
\begin{equation*}
\|u_1(t)-u_2(t)\|\leq \|u_1(t_0)-u_2(t_0)\|\Lambda(t)
+(\varepsilon_1+\varepsilon_2)\Pi(t)
\end{equation*}
where 
\begin{equation*}
\Lambda(t)=1+\int_{t_0}^tk(s)\exp\left(\int_s^tk(\tau)d\tau\right)ds
\end{equation*}
and 
\begin{equation*}
\Pi(t)=t-t_0+\int_{t_0}^t(s-t_0)k(s)\exp\left(\int_s^tk(\tau)d\tau\right)ds.
\end{equation*}
\end{lemma}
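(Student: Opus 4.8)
The plan is to reduce the stated bound to a scalar Gronwall inequality applied to the function $\phi(t) := \|u_1(t) - u_2(t)\|$. First I would set $v := u_1 - u_2$, which is differentiable with $\dot v = \dot u_1 - \dot u_2$, and estimate its derivative pointwise. Writing $\dot u_i(t) = f(t, u_i(t)) + r_i(t)$ with $\|r_i(t)\| \le \varepsilon_i$ (this is precisely the approximate-solution hypothesis), the triangle inequality together with the Lipschitz bound on $f$ in the second variable gives
$$\|\dot v(t)\| \le \|f(t, u_1(t)) - f(t, u_2(t))\| + \|r_1(t)\| + \|r_2(t)\| \le k(t)\,\phi(t) + (\varepsilon_1 + \varepsilon_2)$$
for every $t \in I$.

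The next step is to pass from this pointwise bound on $\dot v$ to an integral inequality for $\phi$, and here the main subtlety arises: $\phi = \|v\|$ is in general not differentiable, so I would avoid differentiating the norm and instead invoke Bourbaki's mean value inequality (the more elementary result underlying the cited section). Since $k$ is regulated and $\phi$ is continuous, the bounding function $s \mapsto k(s)\phi(s) + (\varepsilon_1 + \varepsilon_2)$ is itself regulated, so the mean value inequality applied to $v$ yields $\|v(t) - v(t_0)\| \le \int_{t_0}^t \big(k(s)\phi(s) + \varepsilon_1 + \varepsilon_2\big)\,ds$ for $t \ge t_0$. Combining this with $\phi(t) \le \phi(t_0) + \|v(t) - v(t_0)\|$ produces the Gronwall integral inequality
$$\phi(t) \le a(t) + \int_{t_0}^t k(s)\,\phi(s)\,ds, \qquad a(t) := \phi(t_0) + (\varepsilon_1 + \varepsilon_2)(t - t_0).$$

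Finally I would close the estimate by the standard integrating-factor argument. Setting $R(t) := \int_{t_0}^t k(s)\phi(s)\,ds$, the previous inequality gives $R'(t) = k(t)\phi(t) \le k(t)\,(a(t) + R(t))$, so multiplying by $\exp\big(-\int_{t_0}^t k(\tau)\,d\tau\big)$ and integrating from $t_0$, where $R(t_0) = 0$, produces
$$R(t) \le \int_{t_0}^t a(s)\,k(s)\exp\Big(\int_s^t k(\tau)\,d\tau\Big)\,ds,$$
whence $\phi(t) \le a(t) + R(t)$. Substituting the explicit $a(s) = \phi(t_0) + (\varepsilon_1 + \varepsilon_2)(s - t_0)$ and separating the terms carrying the factor $\phi(t_0) = \|u_1(t_0) - u_2(t_0)\|$ from those carrying $\varepsilon_1 + \varepsilon_2$ reproduces exactly the coefficients $\Lambda(t)$ and $\Pi(t)$ of the statement. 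I expect the genuine work to lie in the passage to the integral inequality, namely justifying the mean value step for the merely continuous $\phi$ under only regulatedness of $k$; the Gronwall computation itself and the matching of $\Lambda$ and $\Pi$ are routine but demand care in the integrating-factor manipulation and in the bookkeeping of the two coefficients.
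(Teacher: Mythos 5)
Your argument is correct, and there is in fact nothing in the paper to compare it with: the paper states Lemma~\ref{lem:3} without proof, quoting it from Bourbaki \cite[IV, \S 1, no.~4]{Bo51}. Your route---the pointwise bound $\|\dot v(t)\|\le k(t)\phi(t)+\varepsilon_1+\varepsilon_2$ for $v=u_1-u_2$, passage to an integral inequality for $\phi=\|v\|$ via the mean value inequality, then Gronwall by integrating factor---is the standard one and is, modulo presentation, equivalent to Bourbaki's, which majorizes $\phi$ by the solution of the scalar comparison problem $\dot y=k(t)y+(\varepsilon_1+\varepsilon_2)$, $y(t_0)=\phi(t_0)$: integration by parts in the stated formulas gives $\Lambda(t)=\exp\bigl(\int_{t_0}^t k(\tau)\,d\tau\bigr)$ and $\Pi(t)=\int_{t_0}^t\exp\bigl(\int_s^t k(\tau)\,d\tau\bigr)ds$, i.e.\ exactly the variation-of-constants coefficients (for constant $k=D$ and $t_0=0$ this is the $\Pi(1)=\frac1D(e^D-1)$ used in the paper's proof of Theorem~\ref{thm}). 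One caveat you correctly flagged for the first step in fact recurs in the step you called routine: since $k$ is only regulated, the primitive $R(t)=\int_{t_0}^t k(s)\phi(s)\,ds$ (well defined because $k\phi$ is regulated, being the product of a regulated and a continuous function) satisfies $R'=k\phi$ only off the countable set of discontinuities of $k$; hence in the integrating-factor computation $S(t)=R(t)\exp\bigl(-\int_{t_0}^t k(\tau)\,d\tau\bigr)$ is differentiable only off that countable set, and integrating $S'(t)\le k(t)a(t)\exp\bigl(-\int_{t_0}^t k(\tau)\,d\tau\bigr)$ requires the same Bourbaki-type mean value theorem (continuous function, derivative inequality off a countable set) that you invoked for $\phi$. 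With that repaired, your bookkeeping is exact: substituting $a(s)=\phi(t_0)+(\varepsilon_1+\varepsilon_2)(s-t_0)$ into $R(t)\le\int_{t_0}^t a(s)k(s)\exp\bigl(\int_s^t k(\tau)\,d\tau\bigr)ds$ and using $\phi(t)\le a(t)+R(t)$ reproduces $\Lambda$ and $\Pi$ verbatim.
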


\begin{proof}[of Theorem~\ref{thm}]
Let $\h_x \subseteq \g$ be a closed complement of $\g_x$ 
(see Lemma~\ref{lem:1}(a)). 
There exists an open neighborhood $V$ of $0\in\h_x$ such that the map \break 
$\gamma \: \h_x \to G/G_x, \xi \mapsto \exp(\xi)G_x$, 
is a diffeomorphism onto an open subset of $G/G_x$. 

Choose $r>0$ such that the closed ball $\overline{B_r(0)}$ is
contained in $V$. Since the orbit map $\Phi^x$ defines an embedding 
of $G/G_x$ into $M$, for each sequence $(\xi_n)$ in 
 $\lie h_x$ with $\|\xi_n\|=r$ for all $n\in \N$, the sequence
$(\exp(\xi_n)\cdot x)$ cannot converge to $x\in M$ because 
$\exp(\xi_n)G_x$ does not converge to $G_x$ in $G/G_x$ 
(Lemma~\ref{lem:1}(b)). 

For any  $x\in M$, the image of $\Psi_x$ 
is $\Psi_x(\lie h_x)$ since $\lie g_x = \ker \Psi_x$ is its kernel 
(Lemma~\ref{lem:1}(a)). 
Since $\lie h_x$ is a closed subspace of $\lie g$, it is a Banach space 
and we consider the injective map 
\begin{align*}
A_x:=\left.\Psi_x\right|_{\lie h_x}: \lie h_x&\to T_xM, \quad 
\xi\mapsto \xi_M(x). 
\end{align*}
If $\Psi_x(\g) = A_x(\h_x)$ is not closed in 
$T_xM$, then with the aid of Lemma \ref{lem:2} we find for each $n \in \N$ 
an element $\xi_n\in\lie h_x$ such that $\|A_x(\xi_n)\|<\frac{1}{n}\|\xi_n\|$, and hence 
\[\left\|A_x\left(\frac{\xi_n}{\|\xi_n\|}\right)\right\|<\frac{1}{n}.\]
Thus, setting $\eta_n:=\frac{r\xi_n}{\|\xi_n\|}$ we get a sequence $(\eta_n)$
in $\lie g$ such that $\|\eta_n\|=r$ for all $n$ and
\[\lim_{n\to\infty}A_x(\eta_n)=0.\]

Let $\phi:U\to W\subseteq E$ be a local chart for $M$ centered on the point
$x$  (that is, $\phi(x)=0$) and 
$\xi_W := \phi_*(\xi_M\vert_U)$ denote the corresponding smooth vector 
field on the open subset $W$ of $E$. 
For two Banach spaces $X$ and $Y$ we write $\B(X,Y)$ for 
the space of continuous linear operators $X \to Y$. 
Then we obtain a smooth map 
$$\Gamma: W \to \B(\lie g,\B(E)),\quad \Gamma_z(\xi) := T_z(\xi_W) $$
(that can be understood as 
the smooth map $z\mapsto (\xi \mapsto D_1 F(z,\xi))$, 
where $F(z,\xi)=D_1\Phi(e,z)\xi$). 
Since $\Gamma$ is smooth, it is in particular continuous in $0$, 
so that we find for each $\varepsilon>0$ a $\delta>0$ with 
$\Gamma(B_\delta(0))\subseteq B_\varepsilon(\Gamma_0)$. 
Replacing $W$ by $B_\delta(0)$, we get for each $z\in W$: 
\[\|\Gamma_z\|\leq\|\Gamma_0\|+\varepsilon=:L.\]
Setting $D := L \cdot r$, this yields for all $x_1$ and $x_2\in W$ and all $\xi\in\lie g$ with $\|\xi\|\leq r$:
\begin{align*}
\|\xi_W(x_1)-\xi_W(x_2)\|&\leq \underset{z\in W}\sup\|T_z\xi_W\|\cdot\|x_1-x_2\|
\leq\underset{z\in W}\sup(\|\Gamma_z\|\cdot\|\xi\|)\cdot\|x_1-x_2\|\\
&\leq L \cdot\|\xi\|\cdot\|x_1-x_2\|
\leq L \cdot r\cdot\|x_1-x_2\|
= D\cdot\|x_1-x_2\|. 
\end{align*}
We now shrink $r$ such that the map  
$$ \xi \mapsto \phi(\exp(\xi)\cdot x) $$ 
maps the closure of the ball $B_r(0)\subseteq \lie g$ 
into the ball $W = B_\delta(0)$. 
Then for each $\xi \in \g$ with $\|\xi\| \leq r$ 
the curve $\gamma^\xi:[0,1]\to W, t \mapsto \phi(\exp t \xi \cdot x)$ 
is part of the integral curve of the vector field $\xi_W$ through $0 = \phi(x)$
(thus, $\gamma^\xi$ can be considered as an approximate 
solution of $\dot c(t)=\xi_W(c(t))$ for $\varepsilon_1 = 0$).
Define $\gamma(t)=0$ for all $t\in [0,1]$ and note that $\gamma$ is a 
approximate 
solution of $\dot c(t)=\xi_W(c(t))$ for $\varepsilon_2 := \|\xi_W(0)\|$.
Applying Lemma \ref{lem:3} to $I=[0,1]$, $t_0=0$,  $f:I\times W\to E$,
$f(z,t)=\xi_W(z)$, the constant function $k=D$ and the curves $\gamma$  and
$\gamma^\xi$,  we get
\[\|\gamma^\xi(t)-0\|\leq
\|0-0\|\cdot\Lambda(t)+\|\xi_W(0)\|\cdot\Pi(t)=\|\xi_W(0)\|\cdot\Pi(t)\]
with $\Pi(t)=t+\left[-\frac{Ds+1}{D}\exp((t-s)D)\right]_0^t=\frac{1}{D}(\exp(Dt)-1)$.
Hence, we have 
\begin{align*}
\|\phi(\exp(\xi)\cdot x)\|=\|\gamma^\xi(1)\|\leq \Pi(1)\cdot\|\xi_W(0)\|.
\end{align*} 
Thus, since $\Pi$ doesn't depend on $\xi$, we get for all $n\in \mathbb{N}$:
\begin{align*}
\|\phi\big(\exp (\eta_n)\cdot x\big)\|\leq \Pi(1)\cdot\|(\eta_n)_W(0)\|
\end{align*}
and because we have $\lim_{n\to\infty}(\eta_n)_W(0)=0$, we obtain 
$\lim_{n\to\infty}\exp (\eta_n)\cdot x=x$. We have seen above that this is a
contradiction to the properness of the action.
\end{proof}

Combining \cite[Ch.~3, \S 1, no.~5, Prop.~10]{Bo89a} with Theorem~\ref{thm}, 
we immediately obtain: 

\begin{corollary} If $M$ is a Hilbert manifold and 
$\Phi \: G \times M \to M$ a smooth action of a Banach--Lie 
group which is free and proper, then the set $M/G$ of \break $G$-orbits 
in $M$ carries a unique Hilbert manifold structure for which 
the quotient map $M \to M/G$ is a submersion. 
\end{corollary}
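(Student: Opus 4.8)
The plan is to prove that the image of $\Psi_x = T_e\Phi^x$ is closed by contradiction, using properness of the action. The key insight is that if the image were \emph{not} closed, then by Lemma~\ref{lem:2} the restriction of $\Psi_x$ to a closed complement $\h_x$ of its kernel $\g_x$ would fail to be bounded below, producing a sequence of unit-norm vectors whose images tend to zero. The strategy is then to show that such a sequence would force the corresponding orbit points $\exp(\eta_n)\cdot x$ to converge back to $x$, contradicting the fact (from Lemma~\ref{lem:1}(b)) that the orbit map is a topological embedding of $G/G_x$.

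Let me think about how to set this up carefully. First I would invoke Lemma~\ref{lem:1}(a) to split $\g = \g_x \oplus \h_x$ with $\g_x = \ker\Psi_x$ finite-dimensional and $\h_x$ a closed complement; then $\im\Psi_x = \Psi_x(\h_x)$, and I restrict to the injective bounded operator $A_x := \Psi_x|_{\h_x}$. If $\im A_x$ is not closed, Lemma~\ref{lem:2} (in contrapositive form) gives for each $n$ a vector $\xi_n \in \h_x$ with $\|A_x(\xi_n)\| < \frac{1}{n}\|\xi_n\|$; normalizing to a fixed sphere of radius $r$ (chosen so the exponential chart behaves well) yields $\eta_n$ with $\|\eta_n\| = r$ and $A_x(\eta_n) \to 0$.

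The heart of the argument—and **the step I expect to be the main obstacle**—is translating the infinitesimal vanishing $\Psi_x(\eta_n) = (\eta_n)_M(x) \to 0$ into the \emph{finite} statement $\exp(\eta_n)\cdot x \to x$. Since the group is only Banach and there are no local compactness or slice hypotheses, I cannot simply appeal to the inverse function theorem; I must control the flow of the vector field $\eta_W$ over the whole time interval $[0,1]$ using only a Lipschitz bound. The plan here is to work in a chart $\phi\colon U \to W \subseteq E$ centered at $x$, push forward the fundamental vector fields to get $\xi_W = \phi_*(\xi_M|_U)$, and establish a uniform Lipschitz constant $D = L\cdot r$ valid for all $\xi$ with $\|\xi\| \le r$, using continuity of the derivative map $\Gamma_z(\xi) = T_z(\xi_W)$ at the origin. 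I would then compare two approximate solutions of $\dot c(t) = \xi_W(c(t))$: the genuine integral curve $\gamma^\xi(t) = \phi(\exp(t\xi)\cdot x)$, which is exact, and the constant curve $c \equiv 0$, which is approximate with error $\varepsilon_2 = \|\xi_W(0)\|$.

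Applying Lemma~\ref{lem:3} with these data gives the bound $\|\gamma^\xi(1)\| \le \Pi(1)\cdot\|\xi_W(0)\|$, where the integrating factor $\Pi(1)$ depends only on $D$ and not on $\xi$. Evaluating at $\xi = \eta_n$ and noting that $\|(\eta_n)_W(0)\| = \|A_x(\eta_n)\|_{\text{(up to the chart isomorphism)}} \to 0$, I conclude $\|\phi(\exp(\eta_n)\cdot x)\| \to 0$, i.e.\ $\exp(\eta_n)\cdot x \to x$ in $M$. But the $\eta_n$ lie on the sphere $\|\eta_n\| = r$ inside $\h_x$, where $\gamma\colon\xi\mapsto\exp(\xi)G_x$ is a diffeomorphism onto its image; hence $\exp(\eta_n)G_x$ stays bounded away from $G_x$, and by the embedding property of Lemma~\ref{lem:1}(b) the points $\exp(\eta_n)\cdot x$ cannot converge to $x$. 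This contradiction shows $\im\Psi_x$ is closed, completing the proof.
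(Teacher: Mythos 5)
Your proposal is a correct proof, but of the wrong statement: what you have written is, almost line for line, the paper's own proof of Theorem~\ref{thm} (the closedness of $\im(\Psi_x)$) --- the contrapositive use of Lemma~\ref{lem:2}, the normalization to the sphere of radius $r$ in $\h_x$, the uniform Lipschitz bound via the map $\Gamma$, the comparison of $\gamma^\xi$ with the constant curve via Lemma~\ref{lem:3}, and the contradiction with the embedding property of Lemma~\ref{lem:1}(b). The statement you were asked to prove, however, is the Corollary: that for a \emph{free} proper action on a \emph{Hilbert} manifold, $M/G$ carries a unique Hilbert manifold structure making $M \to M/G$ a submersion. Nowhere in your argument do you use freeness (beyond what properness gives), nowhere do you use the Hilbert hypothesis, and nowhere do you say anything about $M/G$ --- no charts on the quotient, no submersion property, no uniqueness.

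The missing content is precisely the passage from Theorem~\ref{thm} to the quotient statement. The paper does this in one line by invoking the quotient theorem of Bourbaki \cite[Ch.~3, \S 1, no.~5, Prop.~10]{Bo89a}, whose hypotheses, beyond freeness and properness, amount to requiring that each orbit tangent space $\im(\Psi_x) \subseteq T_xM$ be closed \emph{and} admit a closed complement. Theorem~\ref{thm} supplies closedness; the Hilbert structure supplies the complement (the orthogonal complement of a closed subspace); freeness gives $\g_x = \{0\}$, so that $\Psi_x$ is injective and hence, by the open mapping theorem applied to $\g \to \im(\Psi_x)$, a topological isomorphism onto its closed image. To complete a proof of the Corollary you must either assemble these three ingredients and cite such a quotient theorem, or else construct the submersion charts on $M/G$ yourself (e.g.\ via a slice argument, which again needs the orthogonal complement); as it stands, your proposal stops exactly where the Corollary begins.
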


\begin{remark} In \cite{AN07} it is shown that for any smooth 
action $G \times M \to M$ of a Banach--Lie group and $x \in M$, the stabilizer 
subgroup $G_x$ is a (not necessarily split) Lie subgroup of $G$ 
if the map $\Psi_x \: \g \to T_x(M)$ has closed 
range. This result demonstrates the interesting interplay between 
properties of smooth actions and closedness conditions for the 
maps $\Psi_x$. For proper actions, stabilizers are compact, 
hence even split Lie subgroups (Lemma~\ref{lem:1}).  

It is an open problem whether stabilizer subgroups $G_x$ of smooth 
actions are always Lie subgroups. That they need not be split 
is an immediate consequence of the fact that for each 
non-split closed subspace $F$ of a Banach space $G = X$, the 
subspace $F$ is the stabilizer group $G_x$ of any point $x$ 
in the quotient space $M = X/F$ with respect to the induced action.  
\end{remark}

\nin{\bf Acknowledgement:} We thank Tudor Ratiu for 
drawing our attention to the problem and 
for interesting discussions on the subject. 
We also thank H.~Gl\"ockner for some comments on earlier version 
of the manuscript and the referee for several useful remarks and 
pointing out a reference to Lemma~\ref{lem:3}.

\lastpage

\end{document}